\documentclass{article}
\usepackage{amsmath, amssymb, amsfonts, amsthm}
\usepackage{graphicx}
\usepackage{mathtools}
\usepackage{float}
\usepackage[utf8]{inputenc}

\usepackage{verbatim}
\usepackage{tikz}
\usepackage{enumerate}
\usepackage{todonotes}
\usepackage{caption}
\usepackage{xparse}
\usepackage{standalone}
\usepackage{enumitem}
\usepackage[backend=biber, sorting=none]{biblatex}
\usepackage{hyperref}

\hypersetup{
    colorlinks=true,
    allcolors=blue 
}
\usepackage{biblatex} 
\addbibresource{main.bib}

\usepackage{cleveref}
\usepackage[linesnumbered, boxed, lined, ruled,vlined]{algorithm2e}
\usepackage{enumerate}
\usepackage{todonotes}
\usepackage{longtable}

\newtheorem{theorem}{Theorem}[section]
\newtheorem{corollary}[theorem]{Corollary}
\newtheorem{lemma}[theorem]{Lemma}

\theoremstyle{definition}

\newenvironment{customlemma}[1]
  {\innercustomlemma}
  {\endinnercustomlemma}

\newenvironment{customthm}[1]
  {\innercustomthm}
  {\endinnercustomthm}

\newcommand{\setJ}{P}
\newcommand{\numJ}{N}

\newcommand{\sched}{S}

\newcommand{\block}{B}
\newcommand{\job}{p}
\newcommand{\temp}{T}
\newcommand{\tempJ}{t}
\newcommand{\col}{C}
\newcommand{\colJ}{c}

\newcommand{\ie}{i.\,e.\@\xspace}

\newcommand{\wrt}{w.r.t.\@\xspace}
\newcommand{\wlg}{w.l.o.g.\@\xspace}

\NewDocumentCommand{\TminB}{oo}{%
  t_{\min}(\block
  \IfValueT{#1}{_{#1}}%
  \IfValueT{#2}{^{#2}}%
  )
}
\NewDocumentCommand{\TmaxB}{oo}{%
  t_{\max}(\block
  \IfValueT{#1}{_{#1}}%
  \IfValueT{#2}{^{#2}}%
  )
}

\setlength\parindent{0pt}
\parskip1ex plus0.5ex minus0.2ex

\title{Computing an optimal single machine schedule with sequence dependent setup times using shortest path computations}
\author{Dominik Leib\textsuperscript{1} \and Till Heller\textsuperscript{2} \and Raphael Kühn\textsuperscript{1}}
\date{\textsuperscript{1}Fraunhofer ITWM, 67663 Kaiserslautern, Germany\\
\phantom{0}\textsuperscript{2}Co-Fox GmbH, 29227 Celle, Germany}

\begin{document}

\maketitle

\begin{abstract}

We study a single-machine scheduling problem with sequence dependent setup times, motivated by applications in manufacturing and service industries - in particular, the calendering stage in flooring production. In this phase, setup times are primarily driven by temperature and color transitions between consecutive jobs, with significant impact on throughput and energy efficiency. We present a novel solution framework that transforms the scheduling problem into a path-finding problem on a specially constructed layered graph. By encoding sequence-dependent effects directly into the graph’s structure, we enable the use of classical shortest-path algorithms to compute optimal job sequences. The resulting method is polynomial-time solvable for the two-color case and reveals key structural properties of optimal schedules. Our approach thus provides both a theoretically grounded and practically applicable optimization technique.
\end{abstract}

\section{Introduction} \label{sec:introduction}
    Single-machine scheduling with sequence-dependent setup times is a fundamental problem in combinatorial optimization. Here, a single resource must execute a sequence of jobs, where each transition between consecutive jobs incurs a setup cost depending on the ordered job pair (cf. \cite{santos1997scheduling, wang2011single}). 
This problem arises in many industrial settings—from semiconductor assembly and printing to chemical batch production—where changeovers critically affect throughput and resource utilization (cf. \cite{lin2022single, zheng2024single}). This computational challenge has motivated a wide range of exact, heuristic, and hybrid solution methods. Exact approaches, such as mixed integer linear programming (MILP) (cf. \cite{lin2022single, nesello2018exact}) and dynamic programming (cf. \cite{nesello2018exact}), deliver optimal solutions for small to medium instances. For larger problems, metaheuristics including iterated greedy (IG), variable neighborhood search (VNS), tabu search (TS), and genetic algorithms (GA) have proven effective (cf. \cite{balzereit2024scalable, deliktas2014single, lin2022single}). Recent developments also apply constraint programming (CP) and answer-set programming (ASP) to exploit logical modeling and global constraints (cf. \cite{heinz2022constraint}). The theoretical foundations are rich, with comprehensive surveys summarizing results across various machine models and objectives (cf. \cite{allahverdi1999review, yang1999survey, zhu2006scheduling}). Classical formulations often recast the problem as a traveling salesman problem (TSP) (cf. \cite{bagchi2006review, potts2009fifty}), while recent work explores bicriteria trade-offs and heuristic methods (cf. \cite{leib2022heuristic, schmitz2009bicriteria}).

In this work, we study a scheduling problem arising in the flooring production. The overall production involves several steps: first, in step 1 raw materials are blended in an industrial mixer to produce a homogeneous compound material; in step 2 this compound is processed in a calendering machine where it is rolled into uniform sheets under controlled heat and pressure. Precise temperature control during calendering is crucial to ensure surface quality and structural integrity. The sheets are subsequently cooled, optionally slit into narrower rolls in step 3, and finally pressed into their final shape in step 4 (see \Cref{fig:production}).

\begin{figure}[ht!]
    \centering
    \includegraphics[width=0.7\linewidth]{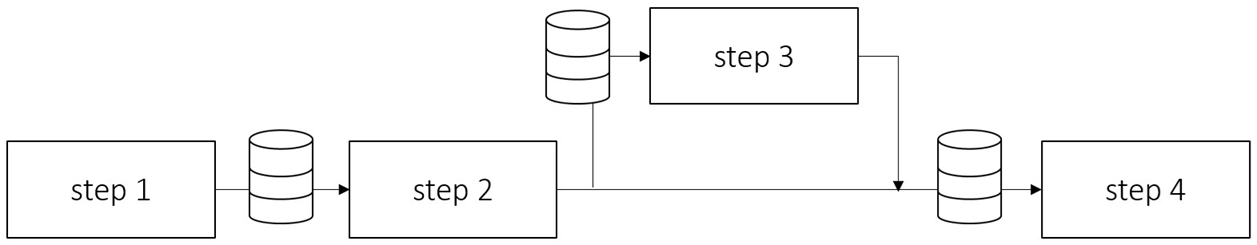}
    \caption{A schematic representation of the production process of flooring production.}
    \label{fig:production}
\end{figure}

We focus on the calendering step, where each job corresponds to a production batch characterized by a process temperature and a color. The sequence-dependent setup times stem from the energy-intensive temperature adjustments and cleaning efforts required when switching between different colors. Our model captures the setup cost as a combination of temperature differences and color changeovers. Although related to classical scheduling with sequence-dependent setup times, the specific structure of setup costs and the industrial context necessitate a specialized study.

This paper contributes three main results. First, we provide a formal problem definition and identify structural properties of optimal schedules, especially for two color groups. Second, leveraging these properties, we develop a graph representation in which feasible schedules correspond to paths. Third, we propose a polynomial-time shortest-path algorithm to find an optimal schedule under these constraints. The paper is organized as follows: \Cref{sec:problem} formally defines the problem; \Cref{sec:theoreticalresults} presents key theoretical insights; Section~\ref{sec:algorithm} details the graph-based algorithm and analyzes its complexity; and an outlook in \Cref{sec:outlook} discusses potential generalizations and open research questions.

\section{Problem Description}\label{sec:problem}
    We consider a set $\setJ$ of $N$ products that need to be processed on a machine; in the following, we refer to them as \emph{jobs}. Each job $\job$ is associated with a pair $(\tempJ, \colJ)$, where $\tempJ \in (0,\infty)$ denotes its \emph{process temperature} and $\colJ \in \{0,1\}$ its \emph{color}. We also use the notation $\tempJ(\job)$ and $\colJ(\job)$ to refer to the process temperature and color of job $\job \in \setJ$, respectively. Given any sequence of jobs $(\job_1,\ldots,\job_n)$ from $\setJ$, we define the \emph{total number of color changes} as 

\[\col(\job_1,\ldots,\job_n) := \sum_{i=1}^{n-1}|\colJ(\job_{i+1})-\colJ(\job_{i})|\]

and analogously define the \emph{total process temperature change} as 
\[\temp(\job_{1},\ldots,\job_{n}) := \sum_{i=1}^{n-1}|\tempJ(\job_{i+1})-\tempJ(\job_{i})|.\] 

A sequence that contains each job in $\setJ$ exactly once is called a \emph{schedule}. If $\setJ$ contains two jobs with the same process temperature and color, we assume without loss of generality that they are scheduled consecutively. To avoid notational overhead, we thus assume that $\tempJ(\job) \neq \tempJ(\job')$ for any pair of jobs with $\colJ(\job) = \colJ(\job')$ - that is, for each color, there is at most one job for each process temperature value.

The two objective functions are naturally conflicting. For example, sorting the jobs in increasing order of $\tempJ$ yields a schedule that is optimal with respect to $\temp$, but the total number of color changes may then reach its maximum of $N-1$. Conversely, a schedule that is optimal with respect to $\col$ consists, without loss of generality, of all jobs of color 0 followed by all jobs of color 1, which may result in a significantly higher total process temperature change than a $\temp$-optimal schedule.
\Cref{fig:optimalschedules} shows two such optimal schedules and illustrates the difference in total process temperature change. We visualize schedules in a specific way: the $y$-coordinate of each job represents its process temperature $\tempJ(p_i)$, and the $x$-coordinate corresponds to the cumulative temperature $\temp(p_1,\ldots,p_i)$. In this representation, the total process temperature change of the schedule corresponds exactly to its length along the $x$-axis.

\begin{figure}[ht!]
    \centering
        \includegraphics[width=0.8\textwidth]{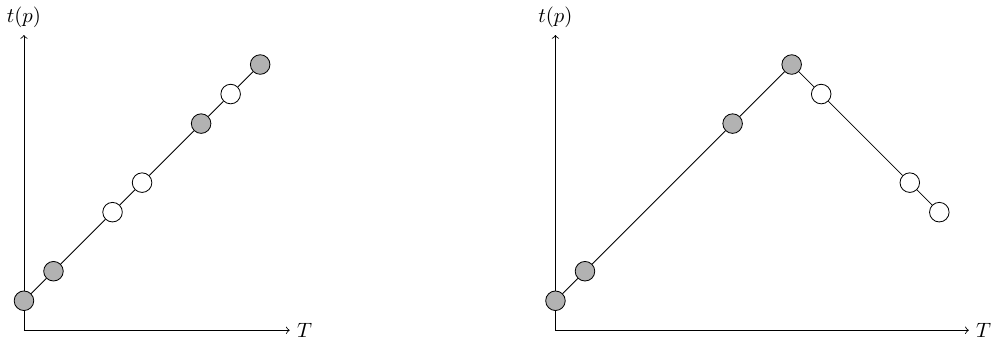}
        \captionsetup{width=0.8\textwidth}
    \caption{Two schedules for a set of jobs $P$, on the left a $\temp$-optimal schedule, on the right a $\col$-optimal schedule.}
    \label{fig:optimalschedules}
\end{figure}

The goal of the optimization problem is to find a schedule that minimizes the total process temperature change among all schedules with at most $\hat{k}$ total color changes for some given $\hat{k}$. Note that $\hat{k}$ is bound above by $\min\{\numJ_0, \numJ_1\}+2$, where $\numJ_c$ is the number of jobs of color $c$ in $\setJ$ and that for each $k \leq \hat{k}$ there exists at least one schedule $\col(\sched) = k$. We formalize the optimization problem as follows:

    \textbf{(P)} Given a set of jobs $\setJ$ and $\hat{k} \leq \min\{\numJ_0, \numJ_1\}+2$, find a schedule $\sched$ such that:
    \begin{itemize}
        \item The total number of color changes satisfies
        \[
        \col(\sched) = \sum_{i=1}^{N-1}|\colJ(\sched_{i+1})-\colJ(\sched_i)| \leq \hat{k}
        \]
        \item and the total process temperature change
        \[
        \temp(\sched) = \sum_{i=1}^{N-1}|\tempJ(\sched_{i+1}) - \tempJ(\sched_i)|
        \]
        is minimized, where $\sched_i$ denotes the job at position $i$ in $\sched$.
    \end{itemize}

The problem is closely related to the first known tractable variant of the Traveling Salesman Problem, the \emph{Gilmore-Gomory TSP} \cite{gilmore1964}, where nodes correspond to jobs with associated process temperatures, and edge lengths are given by the temperature differences of adjacent nodes. The main difference here is that the jobs correspond to pairs of process temperatures (i.e., the first and last jobs of a block), and scheduling allows flipping of these blocks. To our knowledge, the complexity of the problem with an arbitrary number of colors remains open.

In the following, we define a \emph{block} as a maximal sequence of consecutive jobs in $\sched$ that share the same color. Given a schedule $\sched$ with exactly $k$ color changes, it can be uniquely partitioned into $k+1$ such blocks, which we denote by $\block_1, \ldots, \block_{k+1}$.
We extend the notation $\temp(\cdot)$ naturally to blocks or sequences of blocks, so that $\temp(\block_i,\ldots,\block_j)$ denotes the total temperature difference when concatenating the jobs in blocks $\block_i$ through $\block_j$. For each block $B$ we define $\TmaxB := \max\{\tempJ(\job) \mid \job \in \block\}$ as the maximum process temperature within $B$, analogously for $\TminB$. We further define sorting operations on blocks, schedules, and job sequences as follows. The notation $\overrightarrow{B}$ denotes the block $B$ sorted in increasing order of process temperature, while $\overleftarrow{B}$ refers to $B$ sorted in decreasing order. Additionally, for any sequence of jobs, blocks, or an entire schedule, we use $\overline{(p_1, \ldots, p_n)} := (p_n, \ldots, p_1)$ to denote its reversal. For reference, a glossary of all notation used throughout the paper is provided at the end of the document.

\section{Theoretical Results} \label{sec:theoreticalresults}
    In this section, we provide theoretical results about optimal schedules to prepare for a polynomial-time algorithm. The idea is to gradually reduce the set of all possible schedules with each lemma to a subset that can be searched in polynomial time. The key observation is that there exists a schedule with minimal total temperature change and at most $k$ color changes that satisfies the following properties:

\begin{theorem}\label{thm:mainthm}
    There exists an optimal schedule \(\sched^* = (\block_1, \ldots, \block_{k+1})\) with \(k \leq \hat{k}\) such that the following properties hold:
    \begin{enumerate}[leftmargin=1.1cm, label=\alph*)]
        \item For each block \(\block_i = (\job_{i_1}, \ldots, \job_{i_{n_i}})\), either
        \[
        \tempJ(\job_{i_j}) < \tempJ(\job_{i_{j+1}}) \quad \text{for } j=1, \ldots, n_i - 1,
        \]
        or
        \[
        \tempJ(\job_{i_j}) > \tempJ(\job_{i_{j+1}}) \quad \text{for } j=1, \ldots, n_i - 1.
        \]
        We say the blocks are \emph{internally sorted} by process temperature. Additionally, each inner block \(\block_2, \ldots, \block_k\) is internally sorted in increasing order.
        
        \item For each pair of blocks \(\block_r, \block_s\) with \(r < s\) of the same color, it holds that
        \[
        \TmaxB[r] < \TminB[s],
        \]
        i.e., the two subsequences of same-colored blocks are \emph{externally sorted in increasing order}. In particular, two blocks of the same color do not \emph{intersect} with respect to the process temperatures of the jobs contained.
    \end{enumerate}
\end{theorem}

Reducing the problem to schedules with these properties allows us to define a search graph, polynomial in size with respect to $N$, such that an optimal schedule can be found via shortest path search. This establishes parameter tractability of the decision problem with the fixed parameter being the number of colors (which is 2). In the following, we call a schedule \(\sched'\) an \emph{improvement} of \(\sched\) if \(\temp(\sched') \leq \temp(\sched)\) and \(\col(\sched') \leq \col(\sched)\).

\begin{lemma}\label{lem:internalorderingblock}
~ 
\begin{enumerate}[leftmargin=1.1cm, label=\alph*)] 
    \item Any block \(\block\) of jobs satisfies \(\temp(\block) \geq \TmaxB - \TminB\), and if \(\block\) is internally sorted by process temperature, then equality holds.
    \item Any schedule \(\sched\) can be improved to \(\sched'\) such that all blocks of \(\sched'\) are internally sorted.
\end{enumerate}
\end{lemma}

\begin{figure}[ht!]
    \begin{center}
        \includegraphics[width=0.8\textwidth]{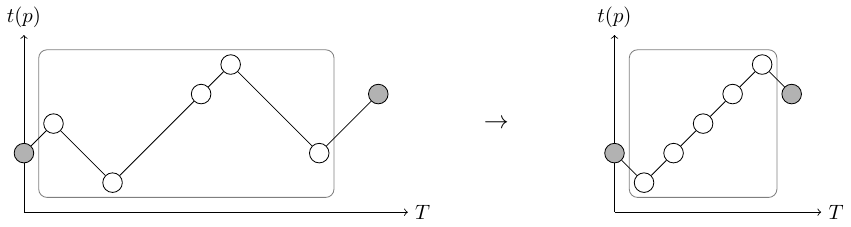}
        \captionsetup{width=0.8\textwidth}
    \caption{Sketch of improvement of a schedule by sorting the blocks internally.}
    \label{fig:enter-label}
    \end{center}
\end{figure}

\begin{proof} ~
\begin{enumerate}[label=\alph*)]
    \item Let $\block = (\job_1,\ldots,\job_n)$ with $\tempJ_i := \tempJ(\job_i)$ and assume $\TminB = \tempJ_r$ and $\TmaxB = \tempJ_s$ with $r < s$. Then 
    \begin{align*}
    \sum_{i=1}^{n-1} |\tempJ_{i}-\tempJ_{i+1}| &= \sum_{i=1}^{r-1} |\tempJ_{i}-\tempJ_{i+1}| + \sum_{i=r}^{s-1} |\tempJ_{i}-\tempJ_{i+1}| + \sum_{i=s}^{n-1} |\tempJ_{i}-\tempJ_{i+1}| \\
    &\geq \sum_{i=1}^{r-1} |\tempJ_{i}-\tempJ_{i+1}| + (\tempJ_r - \tempJ_s) + \sum_{i=s}^{n-1} |\tempJ_{i}-\tempJ_{i+1}| \\
    &\geq (\TmaxB - \TminB).
    \end{align*}
    Now let $\block$ be sorted, \wlg decreasingly, then $\temp(\block) = \sum_{i=1}^{n-1} |\tempJ_{i}-\tempJ_{i+1}|  = \tempJ_1 - \tempJ_n = \TmaxB- \TminB$.

\item For schedules consisting of only one block the claim follows by a), therefore let $\sched = (\block_1,\ldots,\block_{k+1})$ with $k \geq 1$ and let $i \in \{1,\ldots,k+1\}$. Let $\job, \job'$ be the first and last job of $\block_i$, respectively, and let $\tilde{\job}$ be the last job of $\block_{i-1}$ and $\tilde{\job}'$ the first job of $\block_{i+1}$, where we set $\job := \tilde{\job}$ in the case $i=1$ and $\job' := \tilde{\job}'$ in the case $i=k$. Then it holds 
\begin{equation} \label{eq:lem1-1}
\temp(\sched) = \temp(\block_1,\ldots,\block_{i-1}) + \temp(\tilde{\job}, \job) + \temp(\block_i) + \temp(\job',\tilde{\job}') + \temp(\block_{i+1},\ldots,\block_k).
\end{equation}

Let $\job_{i_r}$ and $\job_{i_s}$ be the jobs in $\block_i$ with minimal and maximal process temperature, respectively. W.l.o.g.\@\xspace assume $r < s$, otherwise reverse the schedule. Then it holds
\begin{align}
    \temp(\block_i) &= \temp(\job, \ldots, \job_{i_{r}}) + \temp(\job_{i_r}, \ldots, \job_{i_{s}}) + \temp(\job_{i_s}, \ldots, \job') \\
    &\geq^{a)} \temp(\job, \ldots, \job_{i_{r}}) + \temp(\overrightarrow{\block}_i) + \temp(\job_{i_s}, \ldots, \job') \label{ineq:lem1-1}
\end{align}

By inductively applying the triangle inequality, we see that $\temp(\tilde{\job},\job) + \temp(\job, \ldots, \job_{i_{r}}) \geq \temp(\tilde{\job}, \job_{i_r})$ and $\temp(\job_{i_s}, \ldots, \job') + \temp(\job', \tilde{\job}') \geq \temp(\job_{i_s}, \tilde{\job}')$. By inserting (\ref{ineq:lem1-1}) into (\ref{eq:lem1-1}) and applying those inequalities, it follows

\begin{equation}
\begin{aligned}
    \temp(\sched) &\geq \temp(\block_1,\ldots,\block_{i-1}) + \temp(\tilde{\job}, \job_{i_r})+ \temp(\overrightarrow{\block}_i) + \temp(\job_{i_s},\tilde{\job}') \\ 
    &\quad + \temp(\block_{i+1},\ldots,\block_k) \\
    &= \temp(\block_1,\ldots,\block_{i-1},\overrightarrow{\block}_i,\block_{i+1},\ldots,\block_k) \\
\end{aligned}
\end{equation}

Applying this procedure to every block in $\sched$ results in a schedule $\sched'$ with internally sorted blocks, whose total temperature change is bounded above by $\temp(\sched)$.
\end{enumerate}

\end{proof}

\begin{lemma}\label{lem:nointersectionblocks} ~
Any schedule $\sched$ can be improved to $\sched' = (\block'_1,\ldots,\block'_{k+1})$ such that 
\begin{enumerate}[leftmargin=1.1cm, label=\roman*)]
    \item \label{lem:nointersectionblocksi} Each block $\block'_i$ is internally sorted.
    \item \label{lem:nointersectionblocksii} For all $r < s \in \{1,\ldots,k+1\}$ both either even or odd it holds either $\TmaxB[r]['] < \TminB[s][']$ or $\TmaxB[s]['] < \TminB[r][']$, \ie different blocks in $\sched'$ of the same color do not intersect \wrt the process temperature of their jobs.
\end{enumerate}
\end{lemma}

\begin{proof} Any schedule can be improved to $\sched = (\block_1,\ldots,\block_{k+1})$ that fulfills \ref{lem:nointersectionblocksi} by \Cref{lem:internalorderingblock}. Let some $r<s$ both even or odd be given with \wlg $\TmaxB[r] \geq \TminB[s]$ and $\TminB[r] \leq \TmaxB[s]$ (note that actually strict inequality holds due to uniqueness of process temperature values). Furthermore we assume $\TminB[r] < \TminB[s]$, the other cases of intersection follow analogously.

We make two simple observations: First, removing any job from $\sched$ results in a sequence of jobs with both $\temp$ and $\col$ bound above by $\temp(\sched)$ and $\col(\sched)$, respectively. Conversely, plugging a job $\job$ into any sequence of jobs between $\job'$ and $\job''$ such that $\colJ(\job') = \colJ(\job)$ or $\colJ(\job) = \colJ(\job'')$ and either $\tempJ(\job')< \tempJ(\job) < \tempJ(\job'')$ or $\tempJ(\job') > \tempJ(\job) > \tempJ(\job'')$ also does neither increase $\temp$ nor $\col$.

\begin{figure}[ht!]
    \begin{center}
        \includegraphics[width=0.8\textwidth]{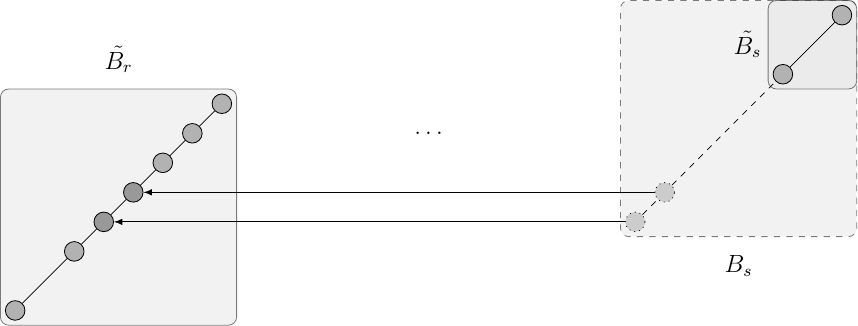}
                \captionsetup{width=0.8\textwidth}
    \caption{Illustration of an improvement by removing intersections. Jobs that fit properly into another block of the same color can be inserted without increasing both $\temp$ or $\col$.}
    \label{fig:blocksnointersect}
    \end{center}
\end{figure}

Hence by removing the job with minimal process temperature from $\block_s$ and plugging it into the unique fitting position in $\block_r$ (as $B_r$ is internally sorted) does neither increase $\temp$ not $\col$. This is due to the fact that the border items of $\block_r$ do not change, since $\TminB[r] < \TminB[s] < \TmaxB[r]$ by assumption. Doing so inductively results in new blocks $\tilde{\block_r}$ and $\tilde{\block_s}$ with no intersection (note that $\tilde{\block_s}$ might be empty). See \Cref{fig:blocksnointersect} for a visualization.

Now, in the new schedule, the number of pairs of same colored blocks that intersect have strictly decreased by at least one. Proceeding inductively over those pairs results in a schedule $\sched'$ that fulfills conditions \ref{lem:nointersectionblocksi} and \ref{lem:nointersectionblocksii} with both $\temp$ and $\col$ bound above by the values of $\sched$. 
\end{proof}

For the next properties we need a technical lemma; the proof is contained in \Cref{sec:appendix}.
\begin{customlemma}{\ref{lem:helperequations}} 
Let $a,b,c,d \in \mathbb{N}$ with $b < c$ and $a < d$. Then
\begin{align*}
       |b-a| + |d-c| \leq  |c-a| + |d-b|.
\end{align*}
\end{customlemma}

\ref{lem:helperequations} can be used to determine the sorting of a block, depending on its surrounding blocks: Let $\block_{i-1},\block_i,\block_{i+1}$ be a subsequence of some schedule $\sched$ with properties \ref{lem:nointersectionblocksi} and \ref{lem:nointersectionblocksii}. Let $p$ be the last job of block $\block_{i-1}$ and $p'$ the first one of block $\block_{i+1}$. Then, if $t(p) < t(p')$, we can \wlg sort $\block_i$ increasingly, without increasing $\temp$; analogously a decreasing sorting is suitable if $t(p) > t(p')$. Both can be seen by choosing $a:= t(p), d := t(p'), b:= \TminB[i]$ and $c:= \TmaxB[i]$. 

\begin{figure}[ht!]
    \begin{center}
        \includegraphics[width=0.5\textwidth]{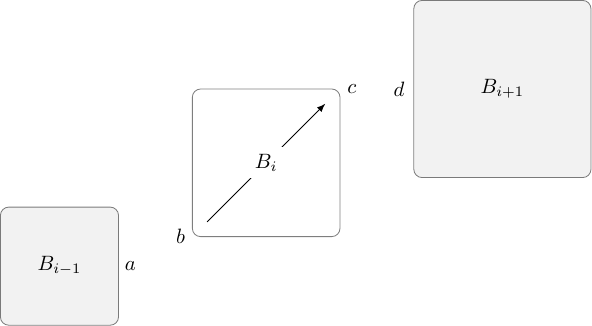}
        \captionsetup{width=0.8\textwidth}
    \caption{Illustration of the internal sorting of blocks, depending on the surrounding blocks.}
    \label{fig:innerblocksordering}
    \end{center}
\end{figure}

Additionally this can be determined without knowledge about the sorting of $\block_{i-1}$ and $\block_{i+2}$; as they don't intersect it holds $t(p) < t(p')$
if and only if $\TmaxB[i-1] < \TmaxB[i+1]$, equivalently for the minima or by comparison of the process temperatures of every pair of jobs from $\block_{i-1}$ and $\block_{i+1}$.

\begin{lemma}\label{lem:blocksorderedexternally}
Any schedule $\sched$ can be improved to $\sched' = (\block'_1,\ldots,\block'_{k+1})$ such that 
\begin{enumerate}[leftmargin=1.1cm, label=\roman*)]
    \item \label{lem:blocksorderedexternallyi} Each block is internally sorted.
    \item \label{lem:blocksorderedexternallyii} Different blocks of the same color do not intersect.
    \item \label{lem:blocksorderedexternallyiii} For all $r < s \in \{1,\ldots,k+1\}$ both either even or odd it holds $\TmaxB[r]['] < \TminB[s][']$, \ie the two subsequences of blocks of the same color are strictly increasing externally.
\end{enumerate}
\end{lemma}

\begin{proof} Any schedule can be improved to $\sched$ with \ref{lem:blocksorderedexternallyi} and \ref{lem:blocksorderedexternallyii} by \Cref{lem:nointersectionblocks}. Assume $\sched$ contains more than $3$ blocks and that the schedule is not externally sorted in decreasing order, both cases can be solved by possibly reversing $\sched$. 

First we observe that $\sched$ fulfills \ref{lem:blocksorderedexternallyiii} as well if and only if there is no sequence $B_{i-1},B_i,B_{i+1},B_{i+2}$ such that $(B_{i-1},B_{i+1})$ does not have the same external sorting as $(B_i, B_{i+2})$, i.e. that either $\TmaxB[i-1] < \TmaxB[i+1]$ and $\TmaxB[i] > \TmaxB[i+2]$ or $\TmaxB[i-1] > \TmaxB[i+1]$ and $\TmaxB[i] < \TmaxB[i+2]$. Clearly if there is such a sequence, then \ref{lem:blocksorderedexternallyiii} can not be fulfilled. Assume \ref{lem:blocksorderedexternallyiii} does not hold. If the two subsequences of same-colored blocks are externally sorted contrariwise, then the case is clear. Otherwise, since $\sched$ is not externally sorted in decreasing order, there is a sequence $B_{j-1}, B_{j+1}, B_{j+3}$ with $\TmaxB[j-1] < \TmaxB[j+1] > \TmaxB[j+3]$ or $\TmaxB[j-1] > \TmaxB[j+1] < \TmaxB[j+3]$ for some $j$. Assume the first case, then if $\TmaxB[j] < \TmaxB[j+2]$, then $\block_j,\block_{j+1},\block_{j+2},\block_{j+3}$ is the sequence searched for. Otherwise, \ie in the case $\TmaxB[j] > \TmaxB[j+2]$, its $\block_{j-1},\block_{j},\block_{j+1},\block_{j+2}$. The case $\TmaxB[j-1] > \TmaxB[j+1] < \TmaxB[j+3]$ works fully analogous, which proves the equivalence.

\begin{figure}[ht!]
    \begin{center}
        \includegraphics[width=\textwidth]{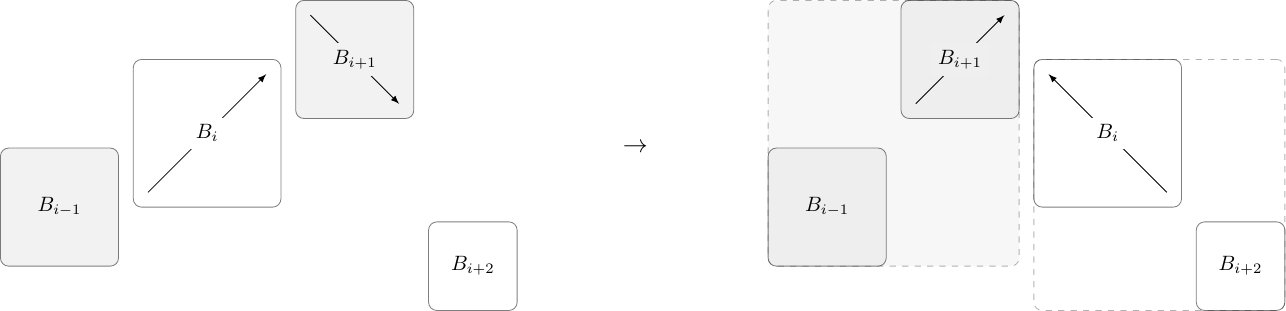}
            \captionsetup{width=0.8\textwidth}
    \caption{Improvement of a schedule by swapping $\block_i$ and $\block_{i+1}$ in a subsequence with externally contrariwise sorted pairs of blocks.}
    \label{fig:blocksorderedexternally}
    \end{center}
\end{figure}

We show now that in case, that such a sequence exists, the schedule can be improved as visualized in \Cref{fig:blocksorderedexternally}: Given such a sequence, the idea is to swap $\block_i$ and $\block_{i+1}$. Without restriction we assume again that $\TmaxB[i-1] < \TmaxB[i+1]$ and $\TmaxB[i] > \TmaxB[i+2]$ holds, the other case works analogously. We swap the blocks $\block_{i+1}$ and $\block_i$ and create a new schedule $\tilde{\sched}$ by setting 
    \[
     \tilde{\sched} := (\block_1, \ldots, \block_{i-1}, \overrightarrow{\block_{i+1}}, \overleftarrow{\block_{i}},\block_{i+2},\ldots, \block_{k+1})
    \]
    By \Cref{lem:helperequations} we know that $\block_i$ is \wlg internally sorted in increasing order in $\sched$ and $\block_{i+1}$ decreasingly. Let $p$ be the last job in $\block_{i-1}$ and $p'$ the first one of $\block_{i+2}$. Then the difference $\temp(\sched) - \temp(\tilde{\sched})$ computes as
    \begin{align*}
        &|\TminB[i] - t(p)| + |\TmaxB[i+1]-\TmaxB[i]| + |\TminB[i+1]-t(p')| \\ 
        &-(|\TminB[i+1] - t(p)| + |\TmaxB[i+1]-\TmaxB[i]| + |\TminB[i]-t(p')|)\\
        =&|\TminB[i] - t(p)| + |\TminB[i+1]-t(p')| \\
        &-(|\underbrace{\TminB[i+1] - t(p)}_{>0}| +|\underbrace{\TminB[i]-t(p')}_{>0}|) \\
        =&|\TminB[i] - t(p)| + |\TminB[i+1]-t(p')| \\
        &-(|\TminB[i+1] - t(p) + \TminB[i]-t(p')|)  \\
        \geq&|\TminB[i] - t(p)| + |\TminB[i+1]-t(p')| \\
        &-(|\TminB[i+1] -t(p')| + |\TminB[i]-t(p)|)  = 0.
    \end{align*}

Note that $\block_{i-1}$ and $ \overrightarrow{\block_{i+1}}$ have the same color, as well as $\overleftarrow{\block_{i}}$ and $\block_{i+2}$. Analogously to the proof of \Cref{lem:internalorderingblock} b) one can show, that there are internal sortings $\block$ of $(\block_{i-1}, \overrightarrow{\block_{i+1}})$ and $\block'$ of $(\overleftarrow{\block_{i}}, \block_{i+2})$, such that \[\temp(\block_1,\ldots,\block_{i-2},\block,\block',\block_{i+3},\ldots,\block_{k+1}) \leq \temp(\tilde{\sched}).\] The number of blocks have decreased by $2$, furthermore this schedule clearly fulfills \ref{lem:blocksorderedexternallyi} and \ref{lem:blocksorderedexternallyii}. Hence proceeding inductively over those quadruplets and possibly reversing the schedule afterwards results in a schedule $\sched'$ fulfilling \ref{lem:blocksorderedexternallyi} to \ref{lem:blocksorderedexternallyiii} as desired.
\end{proof}

Now we are able to prove \Cref{thm:mainthm}:

\begin{customthm}{\ref{thm:mainthm}} 
There exists an optimal schedule $\sched^* = (B_1,\ldots,B_{k+1})$ with $k \leq \hat{k}$ such that:
\begin{enumerate}[leftmargin=1.1cm, label=\roman*)]
    \item \label{thm:mainthmi}Each block is internally sorted.
    \item \label{thm:mainthmii} Blocks of the same color do not intersect.
    \item \label{thm:mainthmiii} Both subsequences of same-colored blocks are externally sorted in increasing order.
    \item \label{thm:mainthmiv} The blocks $B_2, \ldots, \block_k$ are internally sorted in increasing order.
\end{enumerate}
\end{customthm}

\begin{proof}
Let an optimal schedule $S$ of at most $\hat{k}$ color changes be given, then by \Cref{lem:blocksorderedexternally} we can construct an improvement $S'$ fulfilling \ref{thm:mainthmi} to \ref{thm:mainthmiii}. The claim follows now by \Cref{lem:helperequations}; for any internal block $\block'_i$ of $\sched'$ we know that $\TmaxB[i-1]['] < \TmaxB[i+1][']$ holds by \ref{thm:mainthmiii}, hence \Cref{lem:helperequations} implies that $\block'_i$ is \wlg internally sorted in increasing order. The total process temperature change of the resulting schedule $\sched^*$ agrees with $\temp(\sched)$ due to optimality, proving the claim.
\end{proof}

In the upcoming section we introduce the algorithm to solve $(\textbf{P})$. Given sets $\{\block_i^0\}, \{\block_j^1\}$ of pairwise disjoint, non-intersecting blocks of colors $0$ and $1$, respectively, with $(\cup_i \block_i^0) \cup (\cup_j \block_j^1) = \setJ$, then \Cref{thm:mainthm} determines the external ordering of the blocks as well as the internal sorting of the inner blocks of an optimal schedule. 

But the internal sorting of the border blocks is known as well: Since the same-colored blocks increase externally, the first block contains the first $n_1$ jobs $\{p_1,\ldots,p_{n_1}\}$ of color $c$, sorted by process temperature, where $n_1 = |B_1|$. Analogously block $B_{k+1}$ contains the last $n_{k+1}$ jobs of its color. Let $p$ be the first job of $B_2$ and $p_1,p_{n_1}$, the first and last job of $B_1$, respectively, then it follows that $B_1$ is \wlg sorted increasingly, if $|\tempJ(p)-\tempJ(p_{n_1})| \leq |\tempJ(p)-\tempJ(p_{1})|$, and in decreasing order otherwise. Similarly the last block $B_{k+1}$ is ordered depending on the process temperature difference from its first and last job to the last the job of maximal process temperature of $B_k$. Thus no matter the internal sorting of $\block_1$, the transition costs from $B_1$ to $B_2$ is $\min(|\tempJ(p)-\tempJ(p_{n_1})|, |\tempJ(p)-\tempJ(p_{1})|)$. Similar considerations hold for the internal sorting of the last block, which determines the complete schedule. In the following section we use this together with \Cref{thm:mainthm} to define an algorithm for construction of the blocks.

\begin{figure}[ht!]
    \begin{center}
        \includegraphics[width=0.65\textwidth]{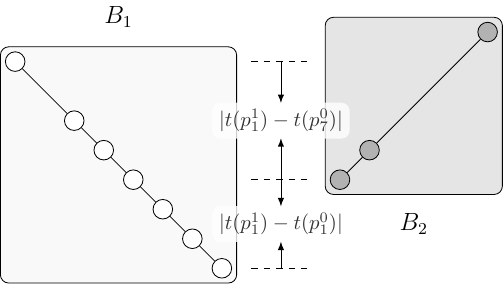}
      \captionsetup{width=0.8\textwidth}  
    \caption{The internal sorting of the border blocks depend on the process temperature differences of the first job in $\block_1$, which is the seventh job $\job^0_7$ of color $0$ \wrt process temperature, and $\job^0_1$ to $\job_1^1$, as the latter is the first job of $\block_2$ by \Cref{thm:mainthm} and $\block_1$ is sorted internally.}
    \label{fig:firstblock}
    \end{center}
\end{figure}

\section{A Shortest Path Algorithm} \label{sec:algorithm}
    In the following, we introduce an algorithm to solve (\textbf{P}), based on the structural results established in the previous section. The algorithm performs a shortest path search in a directed, weighted graph whose paths encode feasible schedules satisfying the properties of \Cref{thm:mainthm}. The graph has polynomial size in the number of jobs and colors, which establishes the tractability of (\textbf{P}) by the use of Dijkstra's algorithm.

Let \(N_0\) and \(N_1\) denote the number of jobs of color $0$ and $1$, respectively. We label the jobs of color \(c \in \{0,1\}\) by \(\{\job_1^c, \ldots, \job_{N_c}^c\}\), sorted increasingly by their process temperature, i.e., \(\tempJ(\job_i^c) < \tempJ(\job_{i+1}^c)\) for all \(i = 1, \ldots, N_c - 1\). For improved readability, we refer to color $0$ as \emph{white} and color $1$ as \emph{black}. For technical reasons we assume that there are at least three jobs and both colors represented with $\hat{k} \geq 2$.

The graph consists of $\hat{k}-1$ \emph{layers}, each composed of two \emph{cards} - a white card and a black card - corresponding to the two colors. Each card contains an \(N_0 \times N_1\) grid of nodes and is traversed either vertically (white card) or horizontally (black card). Rows correspond to white jobs, and columns to black jobs, both sorted by increasing process temperature. The edge weights represent temperature differences between consecutive jobs. For example, an edge from row \(i\) to \(i+1\) on a white card has weight \(|\tempJ(\job_{i+1}^0) - \tempJ(\job_i^0)|\). Analogously, edges between columns \(j\) and \(j+1\) on a black card have weight \(|\tempJ(\job_{j+1}^1) - \tempJ(\job_j^1)|\). \Cref{fig:cards-algorithm} illustrates the structure the cards.

\begin{figure}[ht!]
\begin{center}
\includegraphics[width=0.7\textwidth]{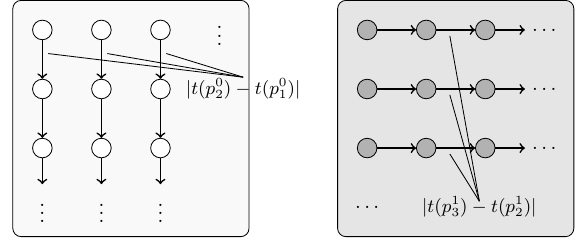}
\captionsetup{width=0.8\textwidth}
\caption{A pair of cards of a layer.} \label{fig:cards-algorithm}
\end{center}
\end{figure}

The first layer (layer 1) is connected to a special construction (layer 0) that models the initial block, which can be ordered in either increasing or decreasing temperature. Layer 0 contains a row of white jobs and a column of black jobs, with traversal costs defined as above.
Each node in the entry graph of layer 0 is connected to the corresponding card of the opposite color, with edge weights reflecting the transition cost between blocks, as discussed at the end of \Cref{sec:theoreticalresults}. The graph also includes a start node \(s\), which connects to the lowest-temperature job of each color (\(\job_1^0\) and \(\job_1^1\)).

Every layer contains an \emph{exit graph}, representing the final block. It includes a row or column of nodes for jobs of each color and a final node \(t_l\), where \(l\) is the index of the layer. Nodes in the exit graph are connected by edges from the opposite-colored card of the same layer, using minimum transition costs depending on the sorting of the last block. Finally, the nodes \(t_l\) are connected to the global target node \(t\). \Cref{fig:FirstLayer-algorithm} shows an example of layer $1$ with entry and exit graphs.

\begin{figure}[ht!]
\begin{center}
    \includegraphics[width=0.8\textwidth]{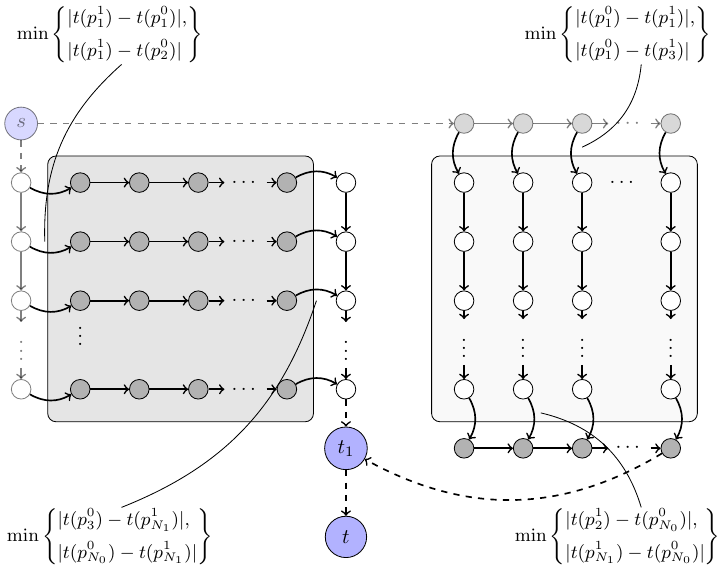}
\captionsetup{width=0.8\textwidth}
 \caption{Exemplary sketch of the first layer with entry and exit graph. The exit graph exists for each layer, the dashed edges have a weight of $0$.}
 \label{fig:FirstLayer-algorithm}
 \end{center}
\end{figure}

Transitions between layers occur only via the cards. Each white card connects to the black card in the next layer, and vice versa. For a white card, we add edges from \((i,j)\) to \((i+1,j)\) for \(i = 1,\ldots,N_0 - 1\), \(j = 1,\ldots,N_1\). For black cards, we add edges from \((i,j)\) to \((i,j+1)\) for \(i = 1,\ldots,N_0\), \(j = 1,\ldots,N_1 - 1\). These edges model color transitions and have weight \(|\tempJ(\job_j^1) - \tempJ(\job_i^0)|\). \Cref{fig:Cardconnection-algorithm} illustrates this structure.

\begin{figure}[ht!]
\begin{center}
\includegraphics[width=0.8\textwidth]{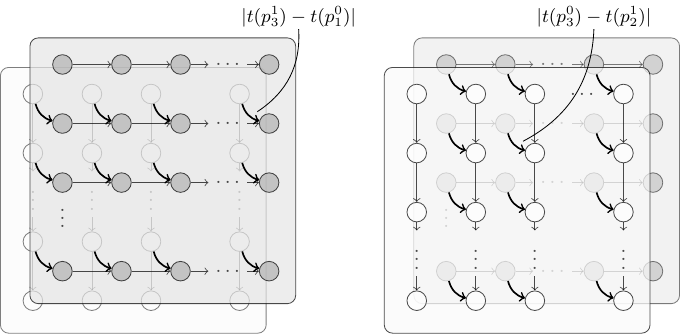}
\captionsetup{width=0.8\textwidth}
\caption{Connection of the layers by the cards.}
\label{fig:Cardconnection-algorithm}
\end{center}
\end{figure}
Note that while the figures display the full cards in each layer for clarity, the actual number of nodes in the search graph may be reduced, as in each subsequent layer one additional row or column is excluded from connectivity.

It is easy to see - without a formal proof - that each \(s - t_l\) path corresponds to a schedule \(\sched\) with exactly \(l-1\) color changes, such that the total path length equals \(\temp(\sched)\). Each job index appears exactly once, and edges enforce increasing temperature order within blocks. The properties of \Cref{thm:mainthm} are thus satisfied by construction. Furthermore, for every schedule \(\sched\) fulfilling those properties, there exists a corresponding \(s - t_k\) path, where \(k+1\) is the number of blocks. Hence, by finding a shortest path from \(s\) to \(t\), we obtain a schedule with at most \(\hat{k}\) color changes and minimal total temperature cost.

\begin{corollary}
The problem (\textbf{P}) to find a schedule of at most $\hat{k}$ color changes and minimal total process temperature change is solvable in polynomial time.
\end{corollary}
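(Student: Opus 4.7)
The plan is to verify that the constructed search graph has polynomial size, to formalize the bijection sketched in the preceding paragraph between $s$-$t$ paths and feasible schedules satisfying \Cref{thm:mainthm}, and then to apply Dijkstra's algorithm, which terminates in polynomial time because all edge weights are non-negative absolute temperature differences.

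First I would count nodes and edges. Each of the $2(\hat{k}-1)$ cards contains an $N_0 \times N_1$ grid of nodes; the entry construction of layer $0$ contributes $O(N_0 + N_1)$ nodes, each of the $\hat{k}-1$ exit graphs contributes $O(N_0 + N_1)$ further nodes, and there are the two distinguished nodes $s$ and $t$. Since $\hat{k} \le \min\{N_0,N_1\}+2 \le \numJ$, the total number of nodes is $O(N_0 N_1 \hat{k}) = O(\numJ^3)$. Each node has constant intra-card out-degree together with a bounded number of outgoing edges into the next layer or into an exit graph, so the edge count is polynomial as well.

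Next I would make the correspondence between paths and schedules precise. A path from $s$ to some $t_\ell$ alternates between white and black cards across layers, and the monotone grid moves inside a card enumerate the jobs of one block in increasing temperature order, while the entry and exit constructions permit the first and last blocks to be read off in decreasing order instead. Because one row (respectively column) becomes inaccessible on every successive white (respectively black) card, the job indices visited on distinct cards of the same color are pairwise disjoint, so the induced sequence of jobs is a schedule with exactly $\ell - 1$ color changes, and by the definition of the edge weights its total path length equals $\temp(\sched)$. Conversely, any schedule satisfying \Cref{thm:mainthm} with $k \le \hat{k}$ color changes can be encoded as an $s$-$t_{k+1}$ path by following, block by block, the monotone row or column movements dictated by its ordering.

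The main obstacle is the careful bookkeeping in this second step: the edges joining cards to the entry and exit graphs must carry exactly the inter-block transition costs discussed at the end of \Cref{sec:theoreticalresults}, and those costs depend on whether each adjacent block is traversed in ascending or descending temperature order. Once this matching is verified, the path length identity follows by telescoping absolute differences within blocks and across block boundaries, and Dijkstra's algorithm on the polynomially sized graph produces a schedule for (\textbf{P}) with at most $\hat{k}$ color changes and minimum total process temperature change in polynomial time.
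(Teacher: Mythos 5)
Your proposal follows essentially the same route as the paper: the paper likewise argues (informally) that the layered card graph has polynomial size, that $s$--$t_l$ paths correspond exactly to schedules with $l-1$ color changes satisfying the structural properties of \Cref{thm:mainthm} with path length equal to $\temp(\sched)$, and that a shortest-path computation therefore solves (\textbf{P}) in polynomial time. Your write-up merely makes explicit the node/edge counting and the disjointness bookkeeping that the paper leaves as ``easy to see,'' so there is no substantive difference in approach.
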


\section{Outlook} \label{sec:outlook}
    In this paper, we provided a formal description of the calendering problem in the context of flooring production, where each job is associated with a temperature and a color. By exploiting structural properties of optimal schedules, we were able to construct a tailored graph-based model that solves the bicriteria problem in polynomial time when the number of colors is two. As a natural direction for further research, one may consider the case of an arbitrary number of colors. In this setting, the total number of color changes is measured nominally, that is,
\[
\col(\job_1,\ldots,\job_N) := \left|\left\{i \in \{1,\ldots,\numJ - 1\} \mid \colJ(\job_i) \neq \colJ(\job_{i+1})\right\}\right|.
\]
The complexity of the general problem remains open to our knowledge. A second, intermediate extension of interest is the case where the number of colors is fixed but greater than two. The structural results established in \Cref{sec:theoreticalresults} do not carry over in a straightforward manner. While \Cref{lem:internalorderingblock} and \Cref{lem:nointersectionblocks} remain valid for any number of colors, the internal structure of color blocks may deviate significantly from the pattern seen in the two-color case. To illustrate this, consider an instance with three colors $\{0,1,2\}$ and the set of jobs
\[
\setJ = \{(1,0), (2,0),(3,0),(4,0), (1,1),(3,1),(0,2),(2,2),(4,2),  (5,2)\},
\]
with maximum allowed color variation $\hat{k} = 4$. The schedule
\[
\sched = (\underbrace{(0,2), (2,2)}_{\block_1}, \underbrace{(2,0),  (1,0)}_{\block_2}, \underbrace{(1,1), (3,1)}_{\block_3}, \underbrace{(3,0), (4,0)}_{\block_4}, \underbrace{(4,2), (5,2)}_{\block_5})
\]
as depicted in \Cref{fig:counterexample} is the unique optimal schedule (up to reversion) of minimum total process temperature change $\temp(\sched) = 7$, verified via full enumeration. Notably, blocks $\block_2$ and $\block_4$, both inner blocks of color $0$, do not share the same internal ordering. Hence, \Cref{thm:mainthm} does not apply directly, and additional structural understanding will be required to extend our approach to three or more colors.

\begin{figure}[ht!]
\begin{center}
\includegraphics[width=0.6\textwidth]{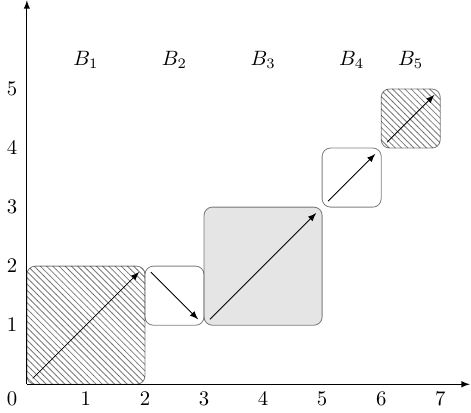}
\captionsetup{width=0.8\textwidth}
\caption{An example for an instance with three colors, where blocks of the same color do not share the same internal ordering, which holds for all optimal schedules.}
\label{fig:counterexample}
\end{center}
\end{figure}

Overall, these extensions open up interesting avenues for future work, both from a combinatorial and algorithmic point of view.

\section*{Declaration of competing interest}

The authors declare that they have no known competing financial interests or personal relationships that could have appeared to influence the work reported in this paper.

\printbibliography

@article{allahverdi1999review,
  title={A review of scheduling research involving setup considerations},
  author={Allahverdi, A. and Gupta, J. ND and Aldowaisan, T.},
  journal={Omega},
  volume={27},
  number={2},
  pages={219--239},
  year={1999},
  publisher={Elsevier},
  doi = {https://doi.org/10.1016/S0305-0483(98)00042-5}
}

@article{bagchi2006review,
  title={A review of TSP based approaches for flowshop scheduling},
  author={Bagchi, T. P. and Gupta, J. ND and Sriskandarajah, C.},
  journal={European Journal of Operational Research},
  volume={169},
  number={3},
  pages={816--854},
  year={2006},
  publisher={Elsevier},
  doi={https://doi.org/10.1016/j.ejor.2004.06.040},
}

@article{potts2009fifty,
  title={Fifty years of scheduling: a survey of milestones},
  author={Potts, C. N. and Strusevich, V. A.},
  journal={Journal of the Operational Research Society},
  volume={60},
  number={1},
  pages={S41--S68},
  year={2009},
  publisher={Springer},
  doi={https://doi.org/10.1057/jors.2009.2}
}

@incollection{schmitz2009bicriteria,
  title={A bicriteria traveling salesman problem with sequence priorities},
  author={Schmitz, H. and Niemann, S.},
  booktitle={Metaheuristics in the Service Industry},
  pages={1--14},
  year={2009},
  publisher={Springer},
  doi = {https://doi.org/10.1007/978-3-642-00939-6_1}
}

@article{yang1999survey,
  title={Survey of scheduling research involving setup times},
  author={Yang, W.},
  journal={International Journal of Systems Science},
  volume={30},
  number={2},
  pages={143--155},
  year={1999},
  publisher={Taylor and Francis},
  doi = {https://doi.org/10.1080/002077299292498}
}

@article{zhu2006scheduling,
  title={Scheduling and lot sizing with sequence-dependent setup: A literature review},
  author={Zhu, X. and Wilhelm, W. E.},
  journal={IIE transactions},
  volume={38},
  number={11},
  pages={987--1007},
  year={2006},
  publisher={Taylor and Francis},
  doi = {https://doi.org/10.1080/07408170600559706}
}

@inproceedings{leib2022heuristic,
  title={A Heuristic Bicriteria Scheduling Approach for a Flooring Production Planning Problem},
  author={Leib, D. and Finhold, E. and Heller, T.},
  booktitle={International Conference on Operations Research},
  pages={567--572},
  year={2022},
  organization={Springer},
  doi = {https://doi.org/10.1007/978-3-031-24907-5_67}
}

@article{santos1997scheduling,
  title={Scheduling with sequence-dependent setup times and early-tardy penalties},
  author={Santos, H. CM and Fran{\c{c}}a, P. M.},
  journal={IFAC Proceedings Volumes},
  volume={30},
  number={19},
  pages={239--244},
  year={1997},
  publisher={Elsevier},
  doi = {https://doi.org/10.1016/S1474-6670(17)42305-6}
}

@article{wang2011single,
  title={Single machine past-sequence-dependent setup times scheduling with general position-dependent and time-dependent learning effects},
  author={Wang, J. and Li, J.},
  journal={Applied Mathematical Modelling},
  volume={35},
  number={3},
  pages={1388--1395},
  year={2011},
  publisher={Elsevier},
  doi = {https://doi.org/10.1016/j.apm.2010.09.017}
}

@article{lin2022single,
  title={Single machine scheduling problems with sequence-dependent setup times and precedence delays},
  author={Lin, S. and Ying, K.},
  journal={Scientific Reports},
  volume={12},
  number={1},
  pages={9430},
  year={2022},
  publisher={Nature Publishing Group UK London},
  doi = {https://doi.org/10.1038/s41598-022-13278-y}
}

@article{nesello2018exact,
  title={Exact solution of the single-machine scheduling problem with periodic maintenances and sequence-dependent setup times},
  author={Nesello, V. and Subramanian, A. and Battarra, M. and Laporte, G.},
  journal={European Journal of Operational Research},
  volume={266},
  number={2},
  pages={498--507},
  year={2018},
  publisher={Elsevier},
  doi = {https://doi.org/10.1016/j.ejor.2017.10.020}
}

@article{balzereit2024scalable,
  title={Scalable Neighborhood Local Search for Single-Machine Scheduling with Family Setup Times},
  author={Balzereit, K. and Gr{\"u}ttemeier, N. and Morawietz, N. and Reinhardt, D. and Windmann, S. and Wolf, P.},
  journal={arXiv preprint },
  year={2024},
  doi = {https://doi.org/10.48550/arXiv.2409.00771}
}

@article{heinz2022constraint,
  title={Constraint programming and constructive heuristics for parallel machine scheduling with sequence-dependent setups and common servers},
  author={Heinz, V. and Nov{\'a}k, A. and Vlk, M. and Hanz{\'a}lek, Z.},
  journal={Computers and Industrial Engineering},
  volume={172},
  pages={108586},
  year={2022},
  publisher={Elsevier},
  doi = {https://doi.org/10.1016/j.cie.2022.108586}
}

@article{zheng2024single,
  title={Single-machine scheduling with setup time: a case study in the cutting of large-scale structural parts},
  author={Zheng, P. and Guo, P. and Wang, Y. and Qin, H. and Li, Y. and Jing, Y.},
  journal={Engineering Optimization},
  pages={1--26},
  year={2024},
  publisher={Taylor and Francis},
  doi = {https://doi.org/10.1080/0305215X.2024.2413646}
}

@inproceedings{deliktas2014single,
  title={Single machine scheduling with sequence-dependent setup times by using AHP and multi-choice goal programming},
  author={Deliktas, D. and Torkul, O. and Ustun, O. and Kiris, S.},
  booktitle={International Symposium of the Analytic Hierarchy Process 2014},
  year={2014},
  doi = {https://doi.org/10.13033/isahp.y2014.153}
}

@article{gilmore1964,
author="Gilmore, P. C. and Gomory, R. E.",
title="Sequencing a One State-Variable Machine: A Solvable Case of the Traveling Salesman Problem",
journal="Operations Research",
publisher="Institute for Operations Research and the Management Sciences (INFORMS)",
year={1964},
month={10},
volume={12},
number={5},
pages={655-679},
doi={https://doi.org/10.1287/opre.12.5.655}
}

\appendix
\section{Appendix} \label{sec:appendix}
    \begin{lemma}\label{lem:helperequations} 
Let $a,c,b,d \in \mathbb{N}$ with  $b < c$ and $a < d$. Then
\begin{align}
        |b-a| + |d-c| \leq |c-a| + |d-b| . \label{eq:helperequation}
\end{align}

\end{lemma}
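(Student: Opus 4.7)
The plan is to prove the inequality by reducing it to the elementary identity $|x|+|y| = \max(|x+y|,\,|x-y|)$, which holds for all real numbers. Set $x := c-a$, $y := d-b$, $u := b-a$, and $v := d-c$, so the target inequality becomes $|u|+|v| \leq |x|+|y|$.

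First I would compute the two combinations of $x,y$ in terms of $u,v$ and the data. A direct rearrangement gives $x - y = (c-a) - (d-b) = (b-a) - (d-c) = u - v$, so $|x-y| = |u-v|$. For the sum, $x + y = (c-b) + (d-a)$, and both summands are strictly positive by the hypotheses $b<c$ and $a<d$. On the other hand, $u + v = (b-a) + (d-c) = (d-a) - (c-b)$, so $|u+v| = \bigl|(d-a)-(c-b)\bigr| \leq (d-a)+(c-b) = x+y = |x+y|$, again using that both $c-b$ and $d-a$ are positive.

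Putting these two facts together, $|x+y| \geq |u+v|$ and $|x-y| = |u-v|$, so taking maxima yields
\[
|x|+|y| \;=\; \max\bigl(|x+y|,\,|x-y|\bigr) \;\geq\; \max\bigl(|u+v|,\,|u-v|\bigr) \;=\; |u|+|v|,
\]
which is precisely the desired inequality \eqref{eq:helperequation}.

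I do not anticipate a real obstacle: conceptually, the inequality just says that the ``non-crossing'' pairing $\{(a,b),(c,d)\}$ has total absolute length no greater than the ``crossing'' pairing $\{(a,c),(b,d)\}$ once we know $b<c$ and $a<d$, and the identity-based argument above is essentially one line of computation. Should the identity $|x|+|y| = \max(|x+y|,|x-y|)$ feel like overkill for such a basic statement, a straightforward case split on the signs of $b-a$ and $d-c$ (four cases, with minor subcases according to where $a,b,c,d$ sit on the real line) also works: in every case the quantity $|c-a|+|d-b|-|b-a|-|d-c|$ collapses to a manifestly nonnegative expression such as $2(c-b)$, $2(d-a)$, or $0$.
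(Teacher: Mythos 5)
Your proposal is correct. The identity $|x|+|y|=\max(|x+y|,\,|x-y|)$ does hold for all reals, and your two computations check out: with $x=c-a$, $y=d-b$, $u=b-a$, $v=d-c$ one indeed has $x-y=u-v$ exactly, and $|u+v|=\bigl|(d-a)-(c-b)\bigr|\leq (d-a)+(c-b)=x+y=|x+y|$ because both $d-a$ and $c-b$ are strictly positive by hypothesis; taking maxima then delivers the inequality in one stroke. This is a genuinely different route from the paper's proof, which is the "straightforward case split" you mention as a fallback: the paper distinguishes $b\geq d$ and $a\geq c$ (where the two sides are equal) from the remaining situation $a<c$, $b<d$, which it further splits into four subcases according to the signs of $b-a$ and $c-d$, each collapsing to $0$, $2(c-a)$, $2(c-b)$, $2(d-a)$, or $2(d-b)$. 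Your identity-based argument is shorter and uniform, and it makes transparent \emph{why} the inequality holds (the crossing pairing dominates because the sum $x+y$ has no cancellation while $u+v$ does); the paper's exhaustive case analysis is more elementary and has the minor side benefit of exhibiting the exact value of the slack in each configuration, showing in particular when equality occurs. Both arguments are valid for arbitrary reals, not just naturals.
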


\begin{proof} ~
We show this by case distinction. First assume $b \geq d$ then $a < d  \leq b < c$, hence
\begin{align*}
    |c-a| + |d-b| - (|b-a| + |d-c|) &= (c-a) - (b-a) + (b-d) - (c-d) \\
    &= (c-b) + (b-c)= 0.
\end{align*}
Analogously if  $a \geq c$ we have $b < c \leq a < d$ and also
\begin{align*}
    |c-a| + |d-b| - (|b-a| + |d-c|) &= (a-c) - (d-c) + (d-b) - (a-b) \\
    &= (a-d) + (d-a) = 0.
\end{align*}

The last case is $a < c$ and $b < d$. For this we consider $4$ sub cases, starting with $b < a$ and $c\leq d$, then $b< a < c\leq d$,
\begin{align*}
    |c-a| + |d-b| - (|b-a| + |d-c|) &=  (d-b)-(a-b) +  (c-a)  - (d-c) \\
    &= 2(c-a) > 0.
\end{align*}

Next $b \geq a$ and $c\leq d$, then $a \leq b < c \leq d$, leading to
\begin{align*}
    |c-a| + |d-b| - (|b-a| + |d-c|) &=  (d-b)-(b-a) +  (c-a)  - (d-c) \\
    &= 2(c-b) > 0.
\end{align*}
Next $b < a$ and $c\geq d$, then $b < a < d \leq c$, leading to
\begin{align*}
    |c-a| + |d-b| - (|b-a| + |d-c|) &=  (d-b)-(a-b) +  (c-a)  - (c-d) \\
    &= 2(d-a) > 0.
\end{align*}
Lastly $b \geq a$ and $c\geq d$, then $a \leq b < d \leq c$, leading to
\begin{align*}
    |c-a| + |d-b| - (|b-a| + |d-c|) &=  (d-b)-(b-a) +  (c-a)  - (c-d) \\
    &= 2(d-b) > 0.
\end{align*}

Altogether, \Cref{eq:helperequation} holds in all cases.
\end{proof}

\section*{Glossary} \label{sec:glossary}
    
\small 
\renewcommand{\arraystretch}{1.5}
\setlength{\LTpre}{0pt} 
\setlength{\LTpost}{0pt} 
\begin{longtable}[l]{@{}lp{0.8\textwidth}@{}}
$\setJ$ & set of jobs/products \\
$\job$ & a job, which is a pair $(\tempJ,\colJ)$ of process temperature $\tempJ$ and color $\colJ$ \\
$\numJ$ & number of jobs \\
$\numJ_0$ & number of jobs of color $0$ \\
$\numJ_1$ & number of jobs of color $1$ \\
$\sched$ & a schedule of a set of jobs $\setJ$ \\
$\tempJ(p)$ & process temperature of $\job$ \\
$\colJ(p)$ & color of $\job$ \\
$\temp$ & total process temperature change of sequence/block/schedule of jobs \\
$\col$ & total color change of sequence/block/schedule of jobs \\
$\TminB$ & minimal process temperature value of all jobs in $\block$ \\
$\TmaxB$ & maximal process temperature value of all jobs in $\block$ \\
$\block$ & a block, a sequence of jobs of the same color \\
$\overrightarrow{\block}$ & block of jobs of $\block$, sorted increasingly \wrt $\tempJ$ \\
$\overleftarrow{\block}$ & block of jobs of $\block$, sorted decreasingly \wrt $\tempJ$ \\
$\overline{\block}$ & reversion of $\block$ 
\end{longtable}
\end{document}